\def\beq{\begin{equation}}
\def\eeq{\end{equation}}
\theoremstyle{definition}
\newtheorem{definition}{Definition}
\newtheorem{observation}{Observation}
\newtheorem*{rema}{Remark}
\theoremstyle{plain}
\newtheorem{theorem}{Theorem}
\newtheorem{lemma}{Lemma}
\newtheorem{corollary}{Corollary}
\newtheorem{proposition}{Proposition}
\numberwithin{equation}{section}
\numberwithin{proposition}{section}
\numberwithin{observation}{section}
\numberwithin{definition}{section}
\numberwithin{theorem}{section}
\numberwithin{problem}{section}
\numberwithin{example}{section}
\numberwithin{claim}{section}
\numberwithin{fact}{section}
\numberwithin{lemma}{section}
\numberwithin{conjecture}{section}
\numberwithin{corollary}{section}
\begin{document}

\title{Graphs without rainbow triangles}

\author{by P. Frankl
\\
R\'enyi Institute, Budapest, Hungary\thanks{The research was supported by the National Research, Development and Innovation Office NKFIH, Hungary,
grant number K132696.}
}

\date{}
\maketitle

\begin{abstract}
Let $\mathcal G_1, \mathcal G_2, \ldots, \mathcal G_t$ be graphs on the same $n$ vertices.
Assuming that there is no way to choose three edges from distinct $\mathcal G_i$ that form a triangle we determine the maximum of $\bigl|\mathcal G_1\bigr| + \ldots + \bigl|\mathcal G_t\bigr|$.
Under the same conditions and $t = 3$ we conjecture that $\bigl|\mathcal G_1\bigr| \bigl|\mathcal G_2\bigr| \bigl|\mathcal G_3\bigr| \leq \left(n^2/4\right)^3$ holds.
This inequality is proved under some additional conditions.
\end{abstract}

\section{Introduction}
\label{sec:1}

Let $(V, \mathcal E)$ be a graph with vertex-set $V$ and edge-set $\mathcal E \subset {V\choose 2}$.
When it causes no confusion we shall omit $V$.
Let us use the notation $\mathcal E(x) = \{y \in V : (x, y) \in \mathcal E\}$ (the neighbourhood of $x$) and
$\mathcal E(\overline x) = \{E \in \mathcal E : x \notin E\}$, the subgraph spanned by $V\setminus \{x\}$.
Note the obvious relation $|\mathcal E| = |\mathcal E(x)| + |\mathcal E(\overline x)|$.

A \emph{triangle} is the complete graph on three vertices, $\left(T, {T\choose 2}\right)$: $T \in {V\choose 3}$.
A \emph{mathching} is a collection $\mathcal M = \{E_1, \ldots, E_\ell\}$ of pairwise disjoint edges and $\ell$ is its \emph{size}.
The maximum size of a matching in $\mathcal E$ is denoted by $\nu(\mathcal E)$, it is called the \emph{matching number}.

For a fixed graph $\mathcal G$, $(V, \mathcal E)$ is called \emph{$\mathcal G$-free} if it contains no subgraph isomorphic to~$\mathcal G$.

\begin{definition}
\label{def:1.1}
For a positive integer $n$ and a fixed graph $\mathcal G$ let $m(n, \mathcal G)$ denote the maximum of $|\mathcal E|$ where $(V, \mathcal E)$ is $\mathcal G$-free and $|V| = n$.
\end{definition}

The first such result was due to Mantel (1907), cf.\ \cite{B} or \cite{L}.
It states that
\beq
\label{eq:1.1}
m(n, \text{ triangle}) = \lfloor n^2/4\rfloor.
\eeq

This simple result went unnoticed and the now burgeoning field of \emph{extremal graph theory} came to existence only after Tur\'an \cite{T} determined $\text{\rm ex}(n, K_r)$ where $K_r$ is the complete graph on $r$ vertices.

We'll need Mantel's theorem in the following stronger form.

\setcounter{proposition}{1}
\begin{proposition}
\label{prop:1.2}
Let $n, \ell$ be positive integers, $n \geq 2\ell$ and $(V, \mathcal E)$ a triangle-free graph with $|V| = n$, $\nu(\mathcal E) = \ell$.
Then
\beq
\label{eq:1.2}
|\mathcal E| \leq \ell(n - \ell).
\eeq

Moreover there exists a partition $V = X \sqcup Y \sqcup Z$, $X = \{x_1, \ldots, x_\ell\}$, $Y = \{y_1, \ldots y_\ell\}$ with the following properties

\smallskip
\noindent
{\rm (i)}\quad\   $(x_i, y_i) \in \mathcal E$, \ $1 \leq i \leq \ell$.

\smallskip
\noindent
{\rm (ii)}\quad For every $z \in Z$, \ $\mathcal E(z) \subset X$.
\end{proposition}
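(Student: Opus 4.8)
The plan is to work with a maximum matching and read off both conclusions from the structure around it. Fix a maximum matching $M = \{(a_1, b_1), \ldots, (a_\ell, b_\ell)\}$ of $\mathcal E$, let $W = \bigcup_{i} \{a_i, b_i\}$ be the set of matched vertices, and set $Z = V \setminus W$, so $|Z| = n - 2\ell \geq 0$. I will use four elementary facts. From the maximality of $M$: (a) $Z$ is an independent set (an edge inside $Z$ could be added to $M$); and (b) there is no augmenting path $z\,a_i\,b_i\,z'$ with distinct $z, z' \in Z$, $(z, a_i), (z', b_i) \in \mathcal E$ (otherwise replacing $(a_i,b_i)$ by $(z,a_i),(b_i,z')$ enlarges $M$). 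From triangle-freeness: (c) no $z \in Z$ is adjacent to both endpoints of an edge $(a_i, b_i)$, since that creates the triangle $\{z, a_i, b_i\}$; and (d) $\mathcal E(a_i) \cap \mathcal E(b_i) = \emptyset$ for each $i$ (a common neighbour gives a triangle), whence $|\mathcal E(a_i)| + |\mathcal E(b_i)| = |\mathcal E(a_i) \cup \mathcal E(b_i)| \leq n$ because $\mathcal E(a_i) \cup \mathcal E(b_i) \subseteq V$.

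For the bound \eqref{eq:1.2} I sum degrees. By (a) every $z \in Z$ has all of its neighbours in $W$, and by (c) at most one in each pair $\{a_i,b_i\}$, so $|\mathcal E(z)| \leq \ell$. Using the handshake identity $2|\mathcal E| = \sum_{v} |\mathcal E(v)|$,
\[
2|\mathcal E| \;=\; \sum_{i=1}^\ell \bigl(|\mathcal E(a_i)| + |\mathcal E(b_i)|\bigr) \;+\; \sum_{z \in Z} |\mathcal E(z)| \;\leq\; \ell n + (n - 2\ell)\ell \;=\; 2\ell(n - \ell),
\]
which is exactly \eqref{eq:1.2}.

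For the structural statement, combine (b) and (c): for each $i$, at most one endpoint of $(a_i, b_i)$ has a neighbour in $Z$. Let $x_i$ be that endpoint when it exists and an arbitrary endpoint of $(a_i,b_i)$ otherwise, and let $y_i$ be the remaining one; put $X = \{x_1, \ldots, x_\ell\}$ and $Y = \{y_1, \ldots, y_\ell\}$. Then (i) holds by construction, and for (ii): if $z \in Z$ and $w \in \mathcal E(z)$, then $w \in W$ by (a), say $w \in \{a_i, b_i\}$; since $w$ has the $Z$-neighbour $z$, the choice of $x_i$ forces $w = x_i \in X$, so $\mathcal E(z) \subseteq X$. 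The one point that genuinely needs attention is precisely this orientation step: a priori, two different unmatched vertices could demand that both endpoints of a single matched edge lie in $X$, and it is facts (b) and (c) together — both truly needed — that rule this out; everything else is a routine degree count.
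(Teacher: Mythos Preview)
Your proof is correct and follows essentially the same line as the paper's: take a maximum matching, use triangle-freeness and the augmenting-path observation to orient each matched pair into $X$ and $Y$, and read off both the partition and the edge bound. The only minor difference is in how you obtain \eqref{eq:1.2}: the paper first establishes the partition and then counts at most $\ell^2$ edges inside $X\cup Y$ plus $(n-2\ell)\ell$ edges from $Z$, whereas you bypass the partition and sum degrees directly via the observation $|\mathcal E(a_i)|+|\mathcal E(b_i)|\le n$, which is a slightly slicker route to the same bound.
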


Note that \eqref{eq:1.2} follows from (i) and (ii).
Indeed, if $w, x_i, y_i$ are three distinct elements of $X \cup Y$, then the absence of triangles implies that $w$ is connected to at most one of the two vertices $x_i$ and $y_i$.
Consequently, the degree of (the arbitrary vertex) $w$ in $X \cup Y$ is at most $\ell$.
Thus $\mathcal E$ restricted to $X \cup Y$ has at most $2\ell \times \ell/ 2 = \ell^2$ edges.

In view of (ii) the number of edges adjacent to $Z$ is at most $|Z|\ell = (n - 2\ell)\ell$.
Thus $|\mathcal E| \leq \ell^2 + (n - 2\ell)\ell = (n - \ell)\ell$.

\begin{proof}[Proof of {\rm (i)} and {\rm (ii)}]
Let $(v_i, w_i)$, $1 \leq i \leq \ell$ be a matching of size $\ell$ in $\mathcal E$ and set $W = \{v_1, w_1, \ldots, v_\ell, w_\ell\}$, $Z = V \setminus W$.
The maximality of the matching implies that $\mathcal E \cap {Z\choose 2} = \emptyset$.
If there is an edge $(z, x_i) \in \mathcal E$ with $z \in Z$ and $x_i \in (v_i, w_i)$, then we put $x_i$ into $X$ and let the other vertex of $(v_i, w_i)$ be $y_i$.
The important observation is that $(z, y_i) \notin \mathcal E$ (it would finish a triangle) and $(z', y_i) \notin \mathcal E$ for $z'\in Z$, $z'\neq z$ as replacing $(x_i, y_i)$ by $(x_i, z)$ and $(y_i, z')$ would produce a larger matching.
If there is no edge connecting $Z$ and $(v_i, w_i)$, then we put arbitrarily one vertex into $X$ and the other into $Y$.
It should be clear that $V = X \sqcup Y \sqcup Z$ is a partition with properties (i) and (ii).
\end{proof}

Recently so-called \emph{rainbow} structures have received quite some attention.
Confer the excellent survey article by Fujita, Magnant and Ozeki \cite{FMO}.

\setcounter{definition}{2}
\begin{definition}
\label{def:1.3}
Let $\mathcal E_i \subset {V\choose 2}$, $1 \leq i \leq t$ and let $\mathcal G$ be a fixed graph, $s = |\mathcal G|$.
If for some choice of $1 \leq i_1 < \ldots < i_s \leq t$ and edges $E_{i_j} \in \mathcal E_{i_j}$ the graph $\bigl\{E_{i_1},\ldots, E_{i_s}\bigr\}$ is isomorphic to $\mathcal G$, then it is called a rainbow copy of~$\mathcal G$.
If no such copy exists, $\mathcal E_1,\ldots, \mathcal E_t$ are said to be \emph{rainbow $\mathcal G$-free}, or \emph{RBG}-free for short.
If $s > t$, then no rainbow copy of $\mathcal G$ exists.
\end{definition}

Setting $T$ for the triangle our first result is the following

\setcounter{theorem}{3}
\begin{theorem}\hspace*{-5pt}\footnote{The referee pointed out that Theorem \ref{th:1.4} was proved in a more general form in \cite{KSSW}.
Our proof is different.}
\label{th:1.4}
Let $t \geq 3$ and suppose that $\mathcal G_1, \ldots, \mathcal G_t \subset {V\choose 2}$, $|V| = n$ are RBT-free.
Then {\rm (i)} or {\rm (ii)} hold

\noindent
{\rm (i)}\quad\ $t = 3$ and
$$
|\mathcal G_1| + |\mathcal G_2| + |\mathcal G_3| \leq n(n - 1),
$$
{\rm (ii)}\quad $t \geq 4$ and
\beq
\label{eq:1.3}
\bigl|\mathcal G_1\bigr| + \ldots + \bigl|\mathcal G_t\bigr| \leq t \bigl\lfloor n^2/4\bigr\rfloor.
\eeq
\end{theorem}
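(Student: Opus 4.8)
The plan is to recode the data by multiplicities. For $e\in\binom V2$ put $S(e)=\{i:e\in\mathcal G_i\}$, $c(e)=|S(e)|$ and $\mathcal H=\mathcal G_1\cup\dots\cup\mathcal G_t$, so that $|\mathcal G_1|+\dots+|\mathcal G_t|=\sum_e c(e)$, and a triangle $uvw$ supports a rainbow $T$ exactly when $S(uv),S(uw),S(vw)$ admit a system of distinct representatives. The first step is the observation that if one of these sets, say $S(uv)$, has at least $3$ elements, then all of Hall's ``union'' conditions become automatic, so the triangle is non-rainbow iff $S(uw)=\emptyset$, or $S(vw)=\emptyset$, or $|S(uw)\cup S(vw)|\le 1$. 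This yields two facts, valid for any RBT-free family: \textbf{(A)} if $c(uv)\ge 3$ and $c(uw)\ge 3$ then $c(vw)=0$; and \textbf{(B)} if $c(uv)\ge 3$ then for every $w\notin\{u,v\}$ at least one of $uw,vw$ has multiplicity $\le 1$. Put $F=\{e:c(e)\ge 3\}$. By (A), $F$ is triangle-free; moreover, writing $\mathrm{Ch}(F)=\{xy:\exists z,\ xz\in F,\ zy\in F\}$ for the set of ``cherry closures'' of $F$, (A) gives $\mathrm{Ch}(F)\cap\mathcal H=\emptyset$, hence $|\mathcal H|\le\binom n2-|\mathrm{Ch}(F)|$, while Mantel's theorem gives $|F|\le\lfloor n^2/4\rfloor$.

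The core of the proof is the inequality
\[
|F|\le|\mathrm{Ch}(F)|+\nu(F)\qquad\text{for every triangle-free graph }F,
\]
which I would prove by induction on $\ell=\nu(F)$. For $\ell=0$, $F$ is edgeless and there is nothing to prove. For $\ell\ge 1$ pick an edge $uv$ lying in a maximum matching of $F$; then $\nu(F-u-v)=\ell-1$ (it is $\ge\ell-1$ by restricting the matching, and $\le\ell-1$ since otherwise adding $uv$ would enlarge a maximum matching of $F$). By induction $|F-u-v|\le|\mathrm{Ch}(F-u-v)|+(\ell-1)$. Now $|F|=|F-u-v|+d_F(u)+d_F(v)-1$, while $\mathrm{Ch}(F)$ contains $\mathrm{Ch}(F-u-v)$ together with the pairs $\{a,v\}$ for $a\in N_F(u)\setminus\{v\}$ (closed by the cherry $a,u,v$) and the pairs $\{b,u\}$ for $b\in N_F(v)\setminus\{u\}$ (closed by $b,v,u$); these $(d_F(u)-1)+(d_F(v)-1)$ pairs are pairwise distinct and none lies in $\mathrm{Ch}(F-u-v)$, so $|\mathrm{Ch}(F)|\ge|\mathrm{Ch}(F-u-v)|+d_F(u)+d_F(v)-2$. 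Subtracting gives $|F|-|\mathrm{Ch}(F)|\le\ell$. Since $\nu(F)\le\lfloor n/2\rfloor$, in particular $|F|\le|\mathrm{Ch}(F)|+\lfloor n/2\rfloor$.

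Now I conclude. Grouping edges by $c(e)=0$, $1\le c(e)\le 2$, $c(e)\ge 3$ gives $\sum_e c(e)\le t|F|+2(|\mathcal H|-|F|)=(t-4)|F|+2(|F|+|\mathcal H|)$. If $t\ge 4$, I bound the first summand by $(t-4)\lfloor n^2/4\rfloor$ (using $t-4\ge 0$ and Mantel), and, using the two displayed estimates above together with the identity $\binom n2+\lfloor n/2\rfloor=2\lfloor n^2/4\rfloor$, the second by $2\bigl(|\mathrm{Ch}(F)|+\lfloor n/2\rfloor+\binom n2-|\mathrm{Ch}(F)|\bigr)=4\lfloor n^2/4\rfloor$; adding yields $\sum_e c(e)\le t\lfloor n^2/4\rfloor$, which is (ii). If $t=3$ this crude grouping is too lossy, so instead I count exactly: let $p,q,r,f$ be the numbers of edges with $c(e)=0,1,2,3$, so $\sum_e c(e)=q+2r+3f$ and $p+q+r+f=\binom n2$, whence it suffices to prove $f\le 2p+q$. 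This is clear if $f=0$; if $f\ge 1$, fix $uv\in F$ and apply (B) to the $n-2$ pairwise disjoint two-edge sets $\{uw,vw\}$, $w\notin\{u,v\}$: each contains a ``thin'' edge (multiplicity $\le 1$), so $p+q\ge n-2\ge\lfloor n/2\rfloor$ (we assume $n\ge 3$; the small cases are checked directly). Combined with $f=|F|\le|\mathrm{Ch}(F)|+\lfloor n/2\rfloor\le p+(p+q)=2p+q$, this gives (i).

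The step I expect to be the real obstacle is the combinatorial inequality $|F|\le|\mathrm{Ch}(F)|+\nu(F)$: one has to hit on this precise quantitative form — it is sharp for $F=K_{\lfloor n/2\rfloor,\lceil n/2\rceil}$, which is exactly the extremal configuration behind the bound $t\lfloor n^2/4\rfloor$ — and on the right induction, which must balance the edges newly created at $u,v$ against the newly created cherry closures. The remaining points — extracting the clean local facts (A) and (B) from Hall's theorem, and verifying that the arithmetic $\binom n2+\lfloor n/2\rfloor=2\lfloor n^2/4\rfloor$ and the inequality $f\le 2p+q$ actually close the two estimates — are routine once the lemma is in hand.
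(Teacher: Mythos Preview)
Your proof is correct and follows a genuinely different route from the paper's. The paper treats the two cases by separate mechanisms: for $t=3$ it proves the local inequality $\sum_{i}|\mathcal G_i\cap\mathcal T|\le 6$ on every triple $\mathcal T$ (via K\"onig's theorem, Proposition~\ref{prop:2.1}) and averages over all $\binom{n}{3}$ triples; for $t\ge 4$ it first reduces to $t=4$, then to \emph{nested} families $\mathcal A\subset\mathcal B\subset\mathcal C\subset\mathcal D$ by repeatedly replacing a pair by its union and intersection, notes that $\mathcal B$ is then triangle-free, and establishes $2|\mathcal B|+|\mathcal C|+|\mathcal D|\le 4\lfloor n^2/4\rfloor$ by induction on $n$, splitting on whether $\mathcal B$ is ``nearly matchable''. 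Your argument is more unified: a single structural lemma, $|F|\le|\mathrm{Ch}(F)|+\nu(F)$ for the high-multiplicity graph $F=\{e:c(e)\ge 3\}$, feeds both cases, and you never pass to nested families or reduce to $t=4$. The paper's averaging proof of (i) is shorter and yields the equality characterisation almost for free; your cherry-closure lemma, on the other hand, is a clean self-contained statement (sharp on $K_{\lfloor n/2\rfloor,\lceil n/2\rceil}$, as you observe), and combined with the identity $\binom{n}{2}+\lfloor n/2\rfloor=2\lfloor n^2/4\rfloor$ it handles all $t\ge 4$ in one stroke. A minor remark: the triangle-freeness of $F$ is not actually used in your inductive proof of the lemma---it holds for arbitrary $F$---though you do need it (via Mantel) to bound the $(t-4)|F|$ term.
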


\begin{rema}
Setting $\mathcal G_1 = \ldots = \mathcal G_t = \mathcal E$, \eqref{eq:1.3} implies Mantel's theorem.
Letting $\mathcal G_1 = \mathcal G_2$ be the complete graph and $\mathcal G_3$ the empty graph (on $n$ vertices) shows that (i) is best possible.
For $n \geq 5$ this is the essentially unique way to achieve equality.
For $t \geq 4$ letting $\mathcal G_1, \ldots, \mathcal G_t$ be the same complete bipartite graph with partite sets of size $\bigl\lfloor n/2\bigr\rfloor$ and $\bigl\lfloor(n + 1)/2\bigr\rfloor$ provides the essentially unique example for equality.
\end{rema}

We should note that knowing \eqref{eq:1.3} for a certain value of $t$ implies \eqref{eq:1.3} for $t + 1$.
Indeed, suppose that \eqref{eq:1.3} holds for $t$, $\mathcal G_1, \ldots, \mathcal G_{t + 1}$ are RBT-free and by symmetry
$\bigl|\mathcal G_1\bigr| \geq \ldots \geq \bigl|\mathcal G_t\bigr| \geq \bigl|\mathcal G_{t + 1}\bigr|$.
From \eqref{eq:1.3}, $\bigl|\mathcal G_t\bigr| \leq n^2/4$ and thereby $\bigl|\mathcal G_{t + 1}\bigr| \leq \bigl\lfloor n^2/4\bigr\rfloor$ follow.
Thus \eqref{eq:1.3} holds for $t + 1$ as well.

Consequently we only need to prove \eqref{eq:1.3} for $t = 4$.
As we will show later, to prove (i) and (ii) it is sufficient to consider \emph{nested families} of graphs, i.e., 
$\mathcal G_1, \ldots, \mathcal G_t$ satisfying $\mathcal G_1 \subset \ldots \subset \mathcal G_t$.
For such graphs we prove a stronger inequality in the case $t = 3$.

\begin{theorem}
\label{th:1.5}
Suppose that $\mathcal G_1 \subset \mathcal G_2 \subset \mathcal G_3 \subset {V\choose 2}$, $|V| = n$ and $\mathcal G_1, \mathcal G_2, \mathcal G_3$ are RBT-free. Then
\beq
\label{eq:1.4}
\bigl|\mathcal G_1\bigr|\bigl|\mathcal G_2\bigr|\bigl|\mathcal G_3\bigr| \leq \bigl\lfloor n^2/4\bigr\rfloor^3.
\eeq
\end{theorem}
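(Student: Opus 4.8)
The plan is to turn the nestedness into local degree conditions and then feed them through Proposition~\ref{prop:1.2} applied to $\mathcal G_1$. Write $m=\lfloor n^2/4\rfloor$ and $d_j(v)=|\mathcal G_j(v)|$. First note that any triangle all of whose edges lie in $\mathcal G_1$ is already a rainbow triangle: one edge plays the role of the $\mathcal G_1$-edge, a second (being in $\mathcal G_1\subset\mathcal G_2$) the $\mathcal G_2$-edge, the third the $\mathcal G_3$-edge. Hence $\mathcal G_1$ is triangle-free and $|\mathcal G_1|\le m$ by \eqref{eq:1.1}; and if $\mathcal G_1=\emptyset$ the product is $0$, so we may assume $\mathcal G_1\neq\emptyset$. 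Next, for a fixed edge $uv\in\mathcal G_1$ and any $w\notin\{u,v\}$, inspecting the triangle $\{u,v,w\}$ and using $\mathcal G_1\subset\mathcal G_2\subset\mathcal G_3$ shows that a rainbow copy appears unless at most one of $uw,vw$ lies in $\mathcal G_2$ and, if one of them does, then the other avoids $\mathcal G_3$. Summing this over $w$ gives, for every edge $uv\in\mathcal G_1$,
$$d_2(u)+d_2(v)\le n,\qquad d_2(u)+d_3(v)\le n,\qquad d_3(u)+d_2(v)\le n,$$
and in particular $d_2(u)+d_2(v)+d_3(u)+d_3(v)\le 2n$.

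Now apply Proposition~\ref{prop:1.2} to the triangle-free graph $\mathcal G_1$: let $\ell=\nu(\mathcal G_1)$ and take the partition $V=X\sqcup Y\sqcup Z$ with $x_iy_i\in\mathcal G_1$ $(1\le i\le\ell)$, $\mathcal G_1(z)\subset X$ for $z\in Z$, no $\mathcal G_1$-edge inside $Z$, and $|\mathcal G_1|\le\ell(n-\ell)$. Applying the displayed inequality to the $\ell$ matching edges $x_iy_i$ and summing controls $\sum_{w\in X\cup Y}\bigl(d_2(w)+d_3(w)\bigr)\le 2n\ell$, and combining this with the trivial bound $\sum_{z\in Z}\bigl(d_2(z)+d_3(z)\bigr)\le 2(n-1)(n-2\ell)$ yields a first estimate $|\mathcal G_2|+|\mathcal G_3|\le n\ell+(n-1)(n-2\ell)$. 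This is not yet sharp enough and must be refined: (a) between two ``columns'' $\{x_i,y_i\}$, $\{x_j,y_j\}$ the four possible edges form a $4$-cycle, and triangle-freeness of $\mathcal G_1$ together with the local inequality limits how many of them can lie in $\mathcal G_2$, respectively in $\mathcal G_3$ (roughly: their contributions to $\mathcal G_2$ and to $\mathcal G_3$ together total at most $4$); (b) the $\mathcal G_1$-edges joining $Z$ to $X$ give further instances of the local inequality; (c) when $|\mathcal G_1|$ is near $\ell(n-\ell)$, Proposition~\ref{prop:1.2} (with the uniqueness case of Mantel's theorem applied inside $X\cup Y$) forces $\mathcal G_1$ to be essentially the complete bipartite graph $K_{\ell,n-\ell}$, which in turn forces $\mathcal G_2$ and $\mathcal G_3$ to be essentially equal to it. Organising (a)--(c) produces upper bounds for $|\mathcal G_2|$, $|\mathcal G_3|$, and for $|\mathcal G_2|+|\mathcal G_3|$, as explicit functions of $\ell$ and $n$ that degrade as $|\mathcal G_1|$ grows.

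Finally, combine $|\mathcal G_2|\,|\mathcal G_3|\le\tfrac14\bigl(|\mathcal G_2|+|\mathcal G_3|\bigr)^2$ with $|\mathcal G_1|\le\ell(n-\ell)$ (sharpened, when needed, by the forced structure of $\mathcal G_1$) and the bounds from the previous step, and verify by a short computation --- splitting into the range of small $\ell$, where $|\mathcal G_1|$ is already so small that the product is negligible, and $\ell$ close to $n/2$, where the refined bounds are essential --- that $|\mathcal G_1|\,|\mathcal G_2|\,|\mathcal G_3|\le m^3$, with equality forcing $\ell=\lceil n/2\rceil$ and $\mathcal G_1=\mathcal G_2=\mathcal G_3$ equal to the balanced complete bipartite graph.

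The heart of the argument --- and the only non-routine part --- is step (c): one cannot use ``$|\mathcal G_1|\le m$'' and ``$|\mathcal G_2|+|\mathcal G_3|$ can be of order $n^2$'' simultaneously at their separate maxima, since that overshoots $m^3$ by a constant factor; a large $\mathcal G_1$ must be paid for by a structurally constrained (hence small) $\mathcal G_2,\mathcal G_3$, and quantifying this trade-off is exactly where the full strength of Proposition~\ref{prop:1.2}, rather than just the value $\nu(\mathcal G_1)$, is needed, and what makes the case analysis on $\ell$ unavoidable. An alternative route, avoiding the explicit structure, is induction on $n$: pick $uv\in\mathcal G_1$, delete $u$ and $v$, apply the inductive bound $\lfloor(n-2)^2/4\rfloor^3$ to the restrictions (note $\lfloor n^2/4\rfloor-\lfloor(n-2)^2/4\rfloor=n-1$), and close using the local inequality, which bounds the number of deleted edges in each $\mathcal G_j$; but there one runs into an equivalent optimisation over the admissible degree vectors $(d_j(u)+d_j(v))_{j=1,2,3}$, which is itself the delicate point.
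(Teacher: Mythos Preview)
Your outline sets up the right framework --- $\mathcal G_1$ triangle-free, the partition $V=X\sqcup Y\sqcup Z$ from Proposition~\ref{prop:1.2}, and the AM--GM step $|\mathcal G_2||\mathcal G_3|\le\tfrac14(|\mathcal G_2|+|\mathcal G_3|)^2$ --- but it is not a proof: the decisive trade-off you allude to in (b) and (c) is never made quantitative, and without it the numbers do not close. Concretely, the best bounds your displayed local inequalities yield (together with the column argument (a), which is exactly Lemma~\ref{lem:2.4}) are $|\mathcal G_1|\le\ell(n-\ell)$ and $|\mathcal G_2|+|\mathcal G_3|\le 2\ell^2+2\ell q+q^2$ with $q=n-2\ell$. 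Taking $q=\ell$ (so $n=3\ell$) these give a product bound of order $2\ell^2\cdot(5\ell^2/2)^2=\tfrac{25}{2}\ell^6$, which exceeds $\lfloor n^2/4\rfloor^3\approx\tfrac{729}{64}\ell^6$. So a genuine additional idea is needed, not just ``organising (a)--(c)''.

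The paper supplies that idea by introducing a second parameter: let $p$ be the maximum $\mathcal G_1$-degree from $X$ into $Z$, say realised at $x_1$ with neighbours $z_1,\dots,z_p\in Z$. Then on the one hand $|\mathcal G_1|\le\ell^2+\ell p$ (each $x_i$ sends at most $p$ edges to $Z$), and on the other hand every pair $z_jz_{j'}$ is \emph{forbidden} in both $\mathcal G_2$ and $\mathcal G_3$, since $x_1z_j\in\mathcal G_1\subset\mathcal G_2$ and $x_1z_{j'}\in\mathcal G_1\subset\mathcal G_3$ would complete a rainbow triangle; this knocks $2\binom{p}{2}$ off the previous bound for $|\mathcal G_2|+|\mathcal G_3|$. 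The resulting two-parameter inequality $(\ell^2+\ell p)\bigl(\ell^2+\ell q+\tfrac{q^2-p^2}{2}\bigr)^2\le\bigl(\tfrac{(2\ell+q)^2}{4}\bigr)^3$ is then a clean calculus exercise. Your item (b) gestures toward using the $X$--$Z$ edges of $\mathcal G_1$, but only as further instances of the degree inequality; the point is rather that they forbid edges \emph{inside} $Z$ in $\mathcal G_2\cup\mathcal G_3$, and it is this, coupled with the sharper bound $|\mathcal G_1|\le\ell^2+\ell p$, that makes the optimisation go through.
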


\begin{rema}
We are going to prove \eqref{eq:1.4} under the slightly weaker condition $\mathcal G_1 \subset \mathcal G_2 \cap \mathcal G_3$, i.e., without requiring $\mathcal G_2 \subset \mathcal G_3$.
The proof is short and elementary.
\end{rema}

\section{The proof of Theorem \ref{th:1.4}}
\label{sec:2}

Let $(V, \mathcal E)$ be a graph.
A subset $S \subset V$ is called a \emph{transversal} or an \emph{edge-cover} if for every $E \in \mathcal E$, $S \cap E \neq \emptyset$.
The inequality $|S| \geq \nu(\mathcal E)$ should be obvious.
K\"onig \cite{K} proved that for bipartite graphs equality holds.
Let us state a simple consequence of it.

\begin{proposition}
\label{prop:2.1}
Suppose that $\mathcal B$ is a bipartite graph with partite sets $X$ and $Y$, $|X| = |Y| =:q$ and $\nu(\mathcal B) < q$.
Then
\beq
\label{eq:2.1}
|\mathcal B| \leq (q - 1) q
\eeq
with equality iff $\mathcal B$ is a complete bipartite graph with partite sets of size $q - 1$ and $q$ (plus an isolated vertex).
\end{proposition}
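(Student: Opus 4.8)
The plan is to apply K\"onig's theorem directly. Since $\mathcal B$ is bipartite, the minimum size of an edge-cover (transversal) equals $\nu(\mathcal B)$, so there is a set $S \subset X \cup Y$ with $|S| = \nu(\mathcal B) \leq q - 1$ meeting every edge of $\mathcal B$. Write $S = S_X \sqcup S_Y$ with $S_X \subset X$, $S_Y \subset Y$, say $|S_X| = a$ and $|S_Y| = b$, so $a + b \leq q - 1$. Every edge has an endpoint in $S_X$ or in $S_Y$; an edge with an endpoint in $S_X$ has its other endpoint among the $q$ vertices of $Y$, and an edge with an endpoint in $S_Y$ has its other endpoint among the $q$ vertices of $X$. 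Hence $|\mathcal B| \leq aq + bq = (a + b)q \leq (q - 1)q$, which is \eqref{eq:2.1}.

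For the equality characterization, suppose $|\mathcal B| = (q-1)q$. Then every inequality above is tight: we need $a + b = q - 1$, every vertex of $S_X$ must be joined to all $q$ vertices of $Y$, every vertex of $S_Y$ joined to all $q$ vertices of $X$, and there can be no edge counted twice, i.e.\ no edge between $S_X$ and $S_Y$. But if both $a \geq 1$ and $b \geq 1$, a vertex of $S_X$ would be joined to all of $Y \supset S_Y$, contradicting the no-double-counting condition (as long as $q \geq 1$). So one of $a, b$ is zero; by symmetry say $b = 0$, $a = q - 1$. Then $\mathcal B$ consists of exactly the edges between $S_X$ (of size $q-1$) and $Y$ (of size $q$), so it is the complete bipartite graph $K_{q-1,q}$ together with the one isolated vertex $X \setminus S_X$. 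Conversely this graph has $(q-1)q$ edges and matching number $q - 1 < q$, so equality is achieved.

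The main thing to be careful about is the equality analysis: one must check that tightness forces \emph{all} of the listed structural conditions simultaneously and then rule out the mixed case $a, b \geq 1$. A small edge case worth a sentence is $q = 1$, where the claim reads $|\mathcal B| \leq 0$ (no edges) with the "complete bipartite graph on partite sets of size $0$ and $1$" being the empty graph on one isolated vertex — consistent, though degenerate. Beyond that the argument is entirely routine, just K\"onig's theorem plus counting.
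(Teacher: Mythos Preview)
Your proof is correct and is precisely the argument the paper has in mind: the paper does not spell out a proof of Proposition~\ref{prop:2.1} at all, merely introducing it as ``a simple consequence'' of K\"onig's theorem, and your write-up supplies exactly those details (vertex cover of size $\le q-1$, the edge count $aq+bq$, and the equality analysis ruling out the mixed case $a,b\ge 1$).
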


\setcounter{corollary}{1}
\begin{corollary}
\label{cor:2.2}
Suppose that $\mathcal G_1, \mathcal G_2, \mathcal G_3$ are RBT-free and let $\mathcal T = \bigl\{T_1, T_2, T_3\bigr\}$ be a triangle, i.e., $\mathcal T = {Z\choose 2}$ for some $3$-set $Z$.
Then
\beq
\label{eq:2.2}
\sum_{1 \leq i \leq 3} \bigl|\mathcal G_i \cap \mathcal T\bigr| \leq 6.
\eeq
\end{corollary}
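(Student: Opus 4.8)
The plan is to reduce \eqref{eq:2.2} to Proposition~\ref{prop:2.1} by introducing an auxiliary bipartite ``membership'' graph. For the fixed triangle $\mathcal T = \{T_1, T_2, T_3\}$, define $\mathcal B$ on partite sets $X = \{T_1, T_2, T_3\}$ and $Y = \{1, 2, 3\}$ by declaring $(T_j, i) \in \mathcal B$ precisely when $T_j \in \mathcal G_i$. By construction,
$$
|\mathcal B| = \sum_{1 \le i \le 3} \bigl|\mathcal G_i \cap \mathcal T\bigr|,
$$
so \eqref{eq:2.2} is equivalent to the assertion $|\mathcal B| \le 6$.

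Next I would show $\nu(\mathcal B) < 3$. A matching of size $3$ in $\mathcal B$ is a bijection $\pi\colon \{1,2,3\} \to \{1,2,3\}$ with $T_j \in \mathcal G_{\pi(j)}$ for each $j$; equivalently, setting $E_i := T_{\pi^{-1}(i)}$ we have $E_i \in \mathcal G_i$ for $i = 1, 2, 3$, and $\{E_1, E_2, E_3\} = \mathcal T$ is a triangle. This is exactly a rainbow copy of the triangle in the sense of Definition~\ref{def:1.3} (when $t = 3$ the required indices $1 < 2 < 3$ are automatically pairwise distinct), contradicting that $\mathcal G_1, \mathcal G_2, \mathcal G_3$ are RBT-free. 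Hence no such matching exists and $\nu(\mathcal B) \le 2$.

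Finally, since $\mathcal B$ is bipartite with $|X| = |Y| = 3$ and $\nu(\mathcal B) < 3$, Proposition~\ref{prop:2.1} applied with $q = 3$ yields $|\mathcal B| \le (3-1)\cdot 3 = 6$, which is precisely \eqref{eq:2.2}.

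I do not expect a genuine obstacle here; the corollary is essentially a one-line consequence of Proposition~\ref{prop:2.1} once the right bipartite graph is written down. The only point deserving a moment's care is the verification in the previous paragraph that a size-$3$ matching in $\mathcal B$ really produces a rainbow triangle as in Definition~\ref{def:1.3}, i.e., that the three edges of $\mathcal T$ are distributed among \emph{pairwise distinct} colour classes $\mathcal G_i$; this is immediate because a matching meets each vertex of $Y = \{1,2,3\}$ at most once. (Alternatively one could argue directly from K\"onig's theorem: the absence of a perfect matching forces a vertex cover of $\mathcal B$ of size at most $2$, and any two vertices of $\mathcal B$ are incident to at most $3 + 3 = 6$ edges; but invoking Proposition~\ref{prop:2.1} is cleaner.)
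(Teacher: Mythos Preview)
Your proof is correct and essentially identical to the paper's: both build the bipartite ``membership'' graph between the three edges of $\mathcal T$ and the three colour indices, observe that a perfect matching would encode a rainbow triangle, and then invoke Proposition~\ref{prop:2.1} with $q=3$. Your write-up simply spells out in more detail the correspondence between a size-$3$ matching and a rainbow copy of the triangle.
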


\begin{proof}
Construct a bipartite graph $\mathcal F$ with partite sets $X = \{1, 2, 3\}$ and $Y = \bigl\{T_1, T_2, T_3\bigr\}$ by making $(i, T_j)$ an edge of $\mathcal F$ iff $T_j \in \mathcal G_i$.

A perfect matching in $\mathcal F$ corresponds to a rainbow triangle.
Thus \eqref{eq:2.2} follows from \eqref{eq:2.1}.
\end{proof}

\begin{proof}[The proof of Theorem \ref{th:1.4} {\rm (i)}]
Adding \eqref{eq:2.2} for all ${n \choose 3}$ subsets $Z \in {V\choose 3}$ yields
$$
\sum_{1 \leq i\leq 3} \sum_{Z \in {V\choose 3}} \bigl|\mathcal G_i \cap \mathcal T\bigr| \leq 6{n \choose 3} = n(n - 1)(n - 2).
$$

Since each incidence $T_j \in \mathcal G_i$ is counted exactly $n - 2$ times (once for every $Z$, $T_j \subset Z\in {V \choose 3}$),
the LHS equals $(n - 2)\bigl(\bigl|\mathcal G_1\bigr| + \bigl|\mathcal G_2\bigr| + \bigl|\mathcal G_3\bigr|\bigr)$ and (i) follows.

\hfill $\square$

\smallskip
In case of equality, equality must hold in \eqref{eq:2.2} for \emph{all} ${n\choose 3}$ choices of~$Z$.
Using the uniqueness part of Proposition \ref{prop:2.1} and ``continuity'' we infer that either for \emph{each} triangle $T$ there are two edges contained in all three graphs $\mathcal G_1, \mathcal G_2, \mathcal G_3$.
Or \emph{each} triangle is contained in exactly two of the graphs $\mathcal G_1, \mathcal G_2, \mathcal G_3$.

It is straightforward to check that for $n = 5$ (and thus for $n \geq 5$) the first case is impossible.
In the second case it easily follows that the triangles must always be in the same two graphs.
Consequently $\mathcal G_i = {V\choose 2}$ holds for two of the graphs and the third is empty.
\end{proof}

Let us next deal with the case $t \geq 4$.
We need two simple statements.
Let us use the notation $d_{\mathcal G}(x, Y)$ to denote the number of edges of the form $(x, y) \in \mathcal G$ with $y \in Y$.

\setcounter{lemma}{2}

\begin{lemma}
\label{lem:2.3}
Suppose that $\mathcal G \subset {V\choose 2}$ is triangle-free, $\bigl\{E_1, \ldots E_\ell\bigr\} \subset \mathcal G$ is a matching with $W = E_1 \cup \ldots \cup E_\ell$.
Then {\rm (i)} and {\rm (ii)} hold.

\smallskip
\noindent
{\rm (i)}\quad\ $d_{\mathcal G}(x, W) \leq \ell \quad \text{ for all } \ x \in V\setminus W$.

\smallskip
\noindent
{\rm (ii)}\quad If $E_1, \ldots, E_\ell$ form a maximal matching, then
$$
d_{\mathcal G}\bigl(x, V/\{x\}\bigr) \leq \ell \quad \text{ for all } \ x \in V\setminus W.
$$
\end{lemma}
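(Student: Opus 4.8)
The plan is to prove both parts of Lemma~\ref{lem:2.3} directly from the triangle-freeness of $\mathcal G$, using the same idea that already appears in the displayed argument following Proposition~\ref{prop:1.2}. For (i), fix $x \in V \setminus W$ and consider one edge $E_j = (v_j, w_j)$ of the matching. Since $\{x, v_j, w_j\}$ are three distinct vertices and $\mathcal G$ is triangle-free, the edge $(v_j, w_j) \in \mathcal G$ forbids $x$ from being joined in $\mathcal G$ to \emph{both} $v_j$ and $w_j$; hence $x$ has at most one neighbour in $E_j$. Summing over the $\ell$ disjoint edges of the matching gives $d_{\mathcal G}(x, W) \le \ell$, which is exactly~(i).

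For (ii), assume in addition that $\{E_1, \ldots, E_\ell\}$ is a \emph{maximal} matching, and fix $x \in V \setminus W$. The key observation is that maximality forces $\mathcal G$ to have no edge inside $V \setminus W$: if $(a,b) \in \mathcal G$ with $a, b \in V \setminus W$, we could adjoin $(a,b)$ to the matching, contradicting maximality. Consequently every edge of $\mathcal G$ incident to $x$ has its other endpoint in $W$, so $d_{\mathcal G}(x, V \setminus \{x\}) = d_{\mathcal G}(x, W)$, and the bound follows immediately from~(i).

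I do not expect any genuine obstacle here; both statements are one-line consequences of the matching structure, and the only thing to be careful about is the bookkeeping — namely that $x \notin W$ so that $\{x, v_j, w_j\}$ really are three distinct vertices in the part~(i) argument, and that the maximality is used correctly (no \emph{augmenting} subtlety is needed, just the trivial fact that a free edge could be added) in part~(ii). The statement is essentially a repackaging of the ingredients already used to deduce \eqref{eq:1.2}, isolated in the form that will be convenient for the $t \ge 4$ case of Theorem~\ref{th:1.4}.
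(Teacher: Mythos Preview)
Your proof is correct and follows essentially the same approach as the paper: for (i) the paper also notes that adjacency of $x$ to both endpoints of $E_j$ would force a triangle, and for (ii) the paper likewise uses that an edge from $x$ to some $y \notin W$ would yield a larger matching. The only cosmetic difference is that you phrase (ii) as ``no edge inside $V\setminus W$'' and then reduce to (i), whereas the paper states the conclusion directly for the fixed vertex $x$.
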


\begin{proof}
To prove (i) just note that $x$ being adjacent to both endvertices of $E_j$ would force a triangle.

To prove (ii) notice further that $x$ being adjacent to a vertex $y \notin W$ would force a larger matching.
\end{proof}

\begin{lemma}
\label{lem:2.4}
Suppose that $\mathcal B, \mathcal C, \mathcal D \subset {V \choose 2}$ are RBT-free and let $\bigl\{E_1, \ldots, E_\ell\bigr\} \subset \mathcal B$ be a matching.
Set $W = E_1 \cup \ldots \cup E_\ell$ and fix $x \in V/W$.
Then
\beq
\label{eq:2.3}
d_{\mathcal C}(x, W) + d_{\mathcal D}(x, W) \leq 2\ell.
\eeq
\end{lemma}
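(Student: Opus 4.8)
The plan is to argue locally, one matching edge at a time. Write $E_j=\{a_j,b_j\}$ for $1\le j\le\ell$. Since $E_1,\dots,E_\ell$ are pairwise disjoint, $W$ is the disjoint union of the $2\ell$ endpoints $a_1,b_1,\dots,a_\ell,b_\ell$, so $d_{\mathcal C}(x,W)+d_{\mathcal D}(x,W)$ breaks up as a sum over $j$ of the quantity
$$
c_j+d_j, \qquad\text{where } c_j=\bigl|\{E\in\{\{x,a_j\},\{x,b_j\}\}:E\in\mathcal C\}\bigr|,\ \ d_j=\bigl|\{E\in\{\{x,a_j\},\{x,b_j\}\}:E\in\mathcal D\}\bigr|.
$$
Hence it suffices to show that $c_j+d_j\le 2$ for every fixed $j$, i.e.\ that among the four membership assertions ``$\{x,a_j\}\in\mathcal C$'', ``$\{x,b_j\}\in\mathcal C$'', ``$\{x,a_j\}\in\mathcal D$'', ``$\{x,b_j\}\in\mathcal D$'' at most two are true.

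The key step is a one-line triangle observation, in the spirit of Corollary \ref{cor:2.2}. Because $\{a_j,b_j\}\in\mathcal B$, the three edges $\{a_j,b_j\}$, $\{x,a_j\}$, $\{x,b_j\}$ span a triangle on the vertex set $\{x,a_j,b_j\}$. Therefore, if $\{x,a_j\}$ lies in one of $\mathcal C,\mathcal D$ and $\{x,b_j\}$ lies in the other, these three edges form a rainbow triangle with one edge from each of $\mathcal B,\mathcal C,\mathcal D$, contradicting RBT-freeness. Consequently neither of the configurations $\bigl(\{x,a_j\}\in\mathcal C \text{ and } \{x,b_j\}\in\mathcal D\bigr)$ nor $\bigl(\{x,a_j\}\in\mathcal D \text{ and } \{x,b_j\}\in\mathcal C\bigr)$ can occur.

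Finally I would run the trivial finite check: if three or more of the four membership assertions attached to $E_j$ held, then some pair ``$\{x,a_j\}$ in one graph, $\{x,b_j\}$ in the other'' would be forced (there are only five such subcases, and each contains a forbidden configuration), which is impossible; so $c_j+d_j\le 2$. Summing over $j=1,\dots,\ell$ yields $d_{\mathcal C}(x,W)+d_{\mathcal D}(x,W)\le 2\ell$, which is \eqref{eq:2.3}. I do not expect a genuine obstacle here — the argument is a single rainbow-triangle observation followed by a finite case inspection. The only point meriting care is the disjointness of the matching, since that is exactly what guarantees the per-edge contributions are over distinct vertices of $W$ and may be added without double-counting.
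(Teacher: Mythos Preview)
Your proof is correct and is essentially the same as the paper's: you bound $d_{\mathcal C}(x,E_j)+d_{\mathcal D}(x,E_j)\le 2$ for each matching edge $E_j$ using the fact that $\{x\}\cup E_j$ cannot span a rainbow triangle with $E_j\in\mathcal B$, and then sum over $j$. The paper states this per-edge inequality in one line without the case check, but the underlying argument is identical.
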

\begin{proof}
For each edge $E_j$, $1 \leq j \leq \ell$, $E_j \in \mathcal B$ because $\{x\} \cup E_j$ is not spanning a rainbow triangle, $d_{\mathcal C}(x, E_j) + d_{\mathcal D} (x, E_j) \leq 2$.

Summing this inequality for $1 \leq j \leq \ell$ yields \eqref{eq:2.3}.
\end{proof}

\setcounter{definition}{4}
\begin{definition}
\label{def:2.5}
Let us call the graph $(V, \mathcal G)$ \emph{nearly matchable} if it possesses a matching of size $\ell$ with $2\ell \geq n - 2$.
\end{definition}

\setcounter{proposition}{5}
\begin{proposition}
\label{prop:2.6}
Suppose that $\mathcal B, \mathcal C, \mathcal D \subset {V\choose 2}$ are RBT-free and $\mathcal B$ is nearly matchable.
Then
\beq
\label{eq:2.4}
|\mathcal C| + |\mathcal D| \leq 2 \bigl\lfloor n^2/4\bigr\rfloor.
\eeq
\end{proposition}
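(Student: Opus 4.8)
The plan is a double-counting argument over degrees into the matched part. Fix a matching $E_1,\dots,E_\ell\subset\mathcal B$ with $2\ell\ge n-2$, set $W=E_1\cup\dots\cup E_\ell$ (so $|W|=2\ell$), $R=V\setminus W$, and write $s:=|R|=n-2\ell$, noting $s\in\{0,1,2\}$ and that $s$ has the same parity as $n$. The first step is to upgrade Lemma~\ref{lem:2.4} so that it bounds degrees into $W$ from \emph{every} vertex, not only from vertices of $R$: I claim $d_{\mathcal C}(v,W)+d_{\mathcal D}(v,W)\le 2\ell$ for all $v\in V$. For $v\in R$ this is Lemma~\ref{lem:2.4} as stated. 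For $v\in W$, say $v\in E_i$, apply Lemma~\ref{lem:2.4} to the smaller matching $\{E_j:j\ne i\}\subset\mathcal B$ (of size $\ell-1$, with union $W\setminus E_i$), obtaining $d_{\mathcal C}(v,W\setminus E_i)+d_{\mathcal D}(v,W\setminus E_i)\le 2(\ell-1)$, and add the trivial bound $d_{\mathcal C}(v,E_i)+d_{\mathcal D}(v,E_i)\le 2$ (the set $E_i$ has just one vertex other than $v$).

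Summing this over $v\in V$ gives $\sum_{v\in V}\bigl(d_{\mathcal C}(v,W)+d_{\mathcal D}(v,W)\bigr)\le 2\ell n$. By a standard incidence count, $\sum_{v}d_{\mathcal C}(v,W)=2|\mathcal C|-2\,|\mathcal C\cap{R\choose 2}|-e_{\mathcal C}(W,R)$, where $e_{\mathcal C}(W,R)$ denotes the number of $\mathcal C$-edges with one endpoint in $W$ and one in $R$, and similarly for $\mathcal D$. Hence $2(|\mathcal C|+|\mathcal D|)\le 2\ell n+2\bigl(|\mathcal C\cap{R\choose 2}|+|\mathcal D\cap{R\choose 2}|\bigr)+\bigl(e_{\mathcal C}(W,R)+e_{\mathcal D}(W,R)\bigr)$. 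The two correction terms are controlled by $|\mathcal C\cap{R\choose 2}|,|\mathcal D\cap{R\choose 2}|\le{s\choose 2}\le 1$ and, invoking Lemma~\ref{lem:2.4} once more, $e_{\mathcal C}(W,R)+e_{\mathcal D}(W,R)=\sum_{r\in R}\bigl(d_{\mathcal C}(r,W)+d_{\mathcal D}(r,W)\bigr)\le 2\ell s$. Substituting, using $\ell=(n-s)/2$ and $2\ell(n+s)=(n-s)(n+s)=n^2-s^2$, everything collapses to $2(|\mathcal C|+|\mathcal D|)\le n^2+s^2-2s$. Since $s\in\{0,1,2\}$ has the parity of $n$, the right-hand side equals $n^2$ when $n$ is even and $n^2-1$ when $n$ is odd, which in both cases is exactly $4\lfloor n^2/4\rfloor$; dividing by $2$ yields \eqref{eq:2.4}.

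I do not anticipate a real obstacle: once Lemma~\ref{lem:2.4} is in hand this is essentially bookkeeping. The single point requiring care is the extension of the degree inequality to vertices of $W$ — one must peel off the matching edge $E_i$ containing $v$ and estimate its contribution separately, and it is precisely here that the hypothesis $E_1,\dots,E_\ell\in\mathcal B$ (each $E_j$ closing a would-be rainbow triangle with any third vertex) does its work. The only other thing to watch is, at the very end, checking that the three residue cases $n-2\ell\in\{0,1,2\}$ all land on $2\lfloor n^2/4\rfloor$ exactly rather than overshooting.
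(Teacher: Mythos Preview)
Your argument is correct. The extension of Lemma~\ref{lem:2.4} to vertices $v\in W$ by peeling off the edge $E_i\ni v$ is valid (the proof of Lemma~\ref{lem:2.4} only needs $x\notin E_j$ for each matching edge $E_j$ under consideration), and the incidence identity $\sum_v d_{\mathcal C}(v,W)=2|\mathcal C|-2|\mathcal C\cap{R\choose 2}|-e_{\mathcal C}(W,R)$ together with the arithmetic $2\ell(n+s)+2s(s-1)=n^2+s^2-2s$ checks out in all three residue cases.

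The paper proceeds differently: it argues by induction on $n$. One fixes a matching with $2\ell\in\{n-1,n-2\}$, picks $y\in V\setminus W$, notes that $\mathcal B(\overline y)$ remains nearly matchable on $n-1$ vertices, applies the induction hypothesis to bound $|\mathcal C(\overline y)|+|\mathcal D(\overline y)|$, and then uses Lemma~\ref{lem:2.4} directly (only for the single unmatched vertex $y$) to control $|\mathcal C(y)|+|\mathcal D(y)|$. Your route is a direct global double count that avoids induction entirely, at the price of needing the degree bound at \emph{every} vertex rather than just the unmatched ones; the small trick of stripping $E_i$ from the matching is exactly what makes this work. The inductive proof is perhaps more mechanical, while yours gives a cleaner one-shot inequality and makes the role of the parity of $n$ more transparent in the final line.
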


\begin{proof}
\eqref{eq:2.4} holds for $n = 1$ and $2$.
Let us prove it for $n \geq 3$ by applying induction on $n$.
Let $\bigl\{E_1, \ldots, E_\ell\bigr\} \subset \mathcal B$ be a matching with $2\ell = n - 1$ or $n - 2$.
Fix $y \in V \setminus \bigl\{E_1 \cup \ldots \cup E_\ell\bigr\}$.
Note that omitting the vertex $y$, the remaining graph $\mathcal B(\overline{y})$ is nearly matchable.
By the induction hypothesis
\beq
\label{eq:2.5}
|\mathcal C(\overline{y})| + |\mathcal D(\overline y)| \leq 2 \left\lfloor (n - 1)^2/4\right\rfloor.
\eeq
Set $\mathcal G(y) = \{x: \{x, y\} \in \mathcal G\}$.
Note that in the case $2\ell = n - 1$, $V \setminus \{y\} = E_1 \cup \ldots \cup E_\ell$.
Thus \eqref{eq:2.3} implies $|\mathcal C(y)| + |\mathcal D(y)| \leq 2\ell = n - 1$.
Adding this to \eqref{eq:2.5}, $|\mathcal C| + |\mathcal D| \leq 2\ell^2 + 2\ell = 2(\ell^2 + \ell) = 2 \left\lfloor (2\ell + 1)^2/4\right\rfloor$ follows.

In the case $2\ell = n - 2$, i.e., $2\ell + 1 = n - 1$ there is one more vertex in $V \setminus \bigl\{E_1 \cup \ldots \cup E_\ell\bigr\}$.
Thus \eqref{eq:2.3} implies $|\mathcal C(y)| + |\mathcal D(y)| \leq 2\ell + 2$.
Adding this to \eqref{eq:2.5} yields
$
|\mathcal C| + |\mathcal D| \leq 2 \left\lfloor (2\ell + 1)^2/4\right\rfloor + 2(\ell + 1) = 2(\ell + 1)^2 = 2\left\lfloor n^2/4\right\rfloor$, as desired.
\end{proof}

We shall deduce \eqref{eq:1.3} from the following statement.

\setcounter{theorem}{6}
\begin{theorem}
\label{th:2.7}
Suppose that $\mathcal B, \mathcal C, \mathcal D \subset {V\choose 2}$ are RBT-free and $\mathcal B$ is triangle-free.
Then
\beq
\label{eq:2.6}
2|\mathcal B| + |\mathcal C| + |\mathcal D| \leq 4 \left\lfloor n^2/4\right\rfloor.
\eeq
\end{theorem}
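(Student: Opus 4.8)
The plan is to induct on $n$, the cases $n\le 2$ being trivial, and for $n\ge 3$ to split according to whether $\mathcal B$ is nearly matchable. Write $\ell=\nu(\mathcal B)$. If $2\ell\ge n-2$, then Proposition~\ref{prop:2.6} gives $|\mathcal C|+|\mathcal D|\le 2\lfloor n^2/4\rfloor$, while Mantel's theorem~\eqref{eq:1.1}, applied to the triangle-free graph $\mathcal B$, gives $|\mathcal B|\le\lfloor n^2/4\rfloor$; adding these yields \eqref{eq:2.6} immediately. So the substantive case is $2\ell\le n-3$, i.e.\ $\mathcal B$ not nearly matchable.

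In that case I would fix a maximal matching $E_1,\dots,E_\ell\subset\mathcal B$, set $W=E_1\cup\dots\cup E_\ell$, and pick a vertex $z\in V\setminus W$ (possible since $|W|=2\ell<n$). The restrictions $\mathcal B(\overline z),\mathcal C(\overline z),\mathcal D(\overline z)$ to $V\setminus\{z\}$ are again RBT-free and $\mathcal B(\overline z)$ is triangle-free, so by the induction hypothesis $2|\mathcal B(\overline z)|+|\mathcal C(\overline z)|+|\mathcal D(\overline z)|\le 4\lfloor(n-1)^2/4\rfloor$. Since $|\mathcal B|=|\mathcal B(\overline z)|+|\mathcal B(z)|$ (and likewise for $\mathcal C,\mathcal D$), it remains to show that the edges at $z$ contribute at most $4\lfloor n^2/4\rfloor-4\lfloor(n-1)^2/4\rfloor=4\lfloor n/2\rfloor$, i.e.\ that $2|\mathcal B(z)|+|\mathcal C(z)|+|\mathcal D(z)|\le 4\lfloor n/2\rfloor$.

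To prove this last inequality I would combine three bounds on the degree of $z$: Lemma~\ref{lem:2.3}(ii) (for the maximal matching in the triangle-free $\mathcal B$) gives $|\mathcal B(z)|\le\ell$; Lemma~\ref{lem:2.4} gives $d_{\mathcal C}(z,W)+d_{\mathcal D}(z,W)\le 2\ell$; and trivially $d_{\mathcal C}\bigl(z,(V\setminus W)\setminus\{z\}\bigr)+d_{\mathcal D}\bigl(z,(V\setminus W)\setminus\{z\}\bigr)\le 2(n-2\ell-1)$. Summing, $2|\mathcal B(z)|+|\mathcal C(z)|+|\mathcal D(z)|\le 2\ell+2\ell+2(n-2\ell-1)=2n-2\le 4\lfloor n/2\rfloor$, so adding this to the inductive estimate and using $\lfloor(n-1)^2/4\rfloor+\lfloor n/2\rfloor=\lfloor n^2/4\rfloor$ closes the induction.

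The main obstacle, I expect, is spotting the right dichotomy: that Proposition~\ref{prop:2.6} together with Mantel's theorem already settles the nearly matchable case, and that in the remaining case the vertex to be deleted must be taken \emph{outside} a maximal matching of $\mathcal B$, so that Lemmas~\ref{lem:2.3}(ii) and~\ref{lem:2.4} apply at once. After that the bookkeeping is routine, though one should note that $2n-2\le 4\lfloor n/2\rfloor$ holds with equality for odd $n$, so there is no slack to spare.
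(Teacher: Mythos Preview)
Your proof is correct and follows exactly the same route as the paper: induction on $n$, the nearly matchable case handled by Proposition~\ref{prop:2.6} plus Mantel, and in the remaining case the deletion of a vertex outside a maximal matching of $\mathcal B$, with the degree contribution bounded via Lemmas~\ref{lem:2.3}(ii) and~\ref{lem:2.4}. The only cosmetic difference is that the paper writes the degree bound directly as $2(n-1)$ and uses $4\lfloor(n-1)^2/4\rfloor+2(n-1)\le 4\lfloor n^2/4\rfloor$, whereas you phrase it via $2n-2\le 4\lfloor n/2\rfloor$ and $\lfloor(n-1)^2/4\rfloor+\lfloor n/2\rfloor=\lfloor n^2/4\rfloor$.
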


\begin{proof}
The statement is trivial for $n = 1,2$.
Let us apply induction to prove \eqref{eq:2.6}.
By Mantel's theorem we have:
\beq
\label{eq:2.7}
|\mathcal B| \leq \left\lfloor n^2/4\right\rfloor.
\eeq
Now if $\mathcal B$ is nearly matchable, then \eqref{eq:2.6} follows from \eqref{eq:2.7} and \eqref{eq:2.4}.
Thus we may assume that $\mathcal B$ is not nearly matchable, $\bigl\{E_1, \ldots, E_\ell\bigr\} \subset \mathcal B$ is a maximal matching, $W = E_1 \cup \ldots \cup E_\ell$, $2\ell \leq n - 2$.
Fix $x \in V \setminus W$.
In view of Lemma \ref{lem:2.3} (ii) $2d_{\mathcal B}(x) \leq 2\ell$.

On the other hand \eqref{eq:2.3} shows that counting with multiplicity there are at least $2\ell$ edges of the form $(x, w)$, $w \in W$ missing from $\mathcal C$ and $\mathcal D$.
Consequently,
$$
2d_{\mathcal B}(x) + d_{\mathcal C}(x) + d_{\mathcal D}(x) \leq 2\ell + 2(n - 1) - 2\ell = 2(n - 1).
$$
By the induction hypothesis,
$$
2|\mathcal B(\overline x)| + |\mathcal C(\overline x)| + |\mathcal D(\overline x)| \leq 4\left\lfloor (n - 1)^2/4\right\rfloor.
$$
Thus
$$
2|\mathcal B| + |\mathcal C| + |\mathcal D| \leq 4 \left\lfloor (n - 1)^2/4\right\rfloor + 2(n - 1) \leq 4 \left\lfloor n^2/4\right\rfloor. \qquad \qedhere
$$
\end{proof}

Finally let us deduce \eqref{eq:1.3} from \eqref{eq:2.6}.
As noted after the statement, it is sufficient to prove \eqref{eq:1.3} for $t = 4$.

Let $\mathcal A, \mathcal B, \mathcal C, \mathcal D \subset {V\choose 2}$ be RBT-free.

\setcounter{observation}{7}
\begin{observation}
$\mathcal A \cap \mathcal B$, $\mathcal A \cup \mathcal B$, $\mathcal C$, $\mathcal D$ are RBT-free as well and
$|\mathcal A \cap \mathcal B| + |\mathcal A \cup \mathcal B| + |\mathcal C| + |\mathcal D| = |\mathcal A| + |\mathcal B| + |\mathcal C| + |\mathcal D|$.\hfill $\square$
\end{observation}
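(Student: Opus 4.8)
The plan is to treat the two assertions separately, the size identity being trivial and the preservation of RBT-freeness requiring only a short case check. For the equality, $|\mathcal A \cap \mathcal B| + |\mathcal A \cup \mathcal B| = |\mathcal A| + |\mathcal B|$ is inclusion--exclusion for finite sets, and adding $|\mathcal C| + |\mathcal D|$ to each side finishes it; there is no work there.

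For RBT-freeness I would argue by contradiction. Suppose $\{E_1, E_2, E_3\}$ is a triangle realized as a rainbow copy for the four families $\mathcal A \cap \mathcal B$, $\mathcal A \cup \mathcal B$, $\mathcal C$, $\mathcal D$, the three edges coming from three \emph{distinct} ones of these. At most two of the selected families can lie in $\{\mathcal A \cap \mathcal B, \mathcal A \cup \mathcal B\}$ (the case ``zero'' is vacuous, $\mathcal C$ and $\mathcal D$ being only two families), so I split on how many do. If exactly one does, the edge $E$ drawn from it satisfies $E \in \mathcal A$ or $E \in \mathcal B$ --- immediate from $\mathcal A \cap \mathcal B \subseteq \mathcal A$ and from the definition of union --- while the other two edges already come from $\mathcal C$ and $\mathcal D$; hence $\{E_1, E_2, E_3\}$ is a rainbow triangle for three distinct original families, contradicting their RBT-freeness. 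If exactly two do, write $E \in \mathcal A \cap \mathcal B$, $F \in \mathcal A \cup \mathcal B$ and let $G$ be the third edge, from $\mathcal C$ or $\mathcal D$. Since $F$ lies in at least one of $\mathcal A, \mathcal B$, assign $F$ to such a family and assign $E$ to the other --- legitimate since $E$ lies in both. Together with $G$ this is a rainbow triangle for $\mathcal A$, $\mathcal B$ and one of $\mathcal C, \mathcal D$, three distinct original families, a contradiction. This exhausts the cases, so the four new families are RBT-free.

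The only point requiring the slightest care is the reassignment in the two-edge case: one must check that the two edges coming from the $\mathcal A$--$\mathcal B$ side can always be matched with $\mathcal A$ and $\mathcal B$ in \emph{distinct} slots. This is precisely where the asymmetry between $\cap$ and $\cup$ is used --- the intersection edge is unconstrained and absorbs whichever of $\mathcal A, \mathcal B$ the union edge does not claim. I do not expect a genuine obstacle; this is a compression-compatibility statement of the familiar ``shifting'' type and the argument is a finite check.
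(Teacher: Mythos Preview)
Your argument is correct. The size identity is indeed inclusion--exclusion, and your case split on how many of the three selected families lie in $\{\mathcal A\cap\mathcal B,\ \mathcal A\cup\mathcal B\}$ covers everything; the reassignment in the two-edge case is exactly the point, and you handle it properly by exploiting that the $\cap$-edge is free to go to whichever of $\mathcal A,\mathcal B$ the $\cup$-edge does not claim.

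As for comparison with the paper: there is nothing to compare. The paper states the observation and immediately places a $\square$, treating both assertions as self-evident and giving no argument whatsoever. Your write-up simply supplies the routine verification the author chose to suppress, and it is the natural (essentially unique) way to do so.
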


In view of the observation if $\mathcal A \not\subset \mathcal B$ and $\mathcal B \not\subset \mathcal A$, then we may replace $\mathcal A$ and $\mathcal B$ by $\mathcal A \cap \mathcal B$ and $\mathcal A \cup \mathcal B$.
Repeating this procedure after renaming the new families $\mathcal A, \mathcal B, \mathcal C, \mathcal D$ in some order, eventually we arrive at $\mathcal A, \mathcal B, \mathcal C, \mathcal D \subset {V\choose 2}$ that are RBT-free and nested, that is, satisfy $\mathcal A \subset \mathcal B \subset \mathcal C \subset \mathcal D$ as well.
Now by the RBT-free property $\mathcal B$ must be triangle-free.
Applying \eqref{eq:2.6} we infer
$|\mathcal A| + |\mathcal B| + |\mathcal C| + |\mathcal D| \leq 2|\mathcal B| + |\mathcal C| + |\mathcal D| \leq 4\left\lfloor n^2/4\right\rfloor$ as desired.\hfill $\square$

\section{The proof of Theorem \ref{th:1.5}}
\label{sec:3}

To avoid double indices let us rename the three families, $\mathcal B:= \mathcal G_1$, $\mathcal C := \mathcal G_2$, $\mathcal D := \mathcal G_3$.
Since $\mathcal B, \mathcal C, \mathcal D$ are RBT-free, $\mathcal B$ is triangle-free.
Set $\ell = \nu(\mathcal B)$.

If $\mathcal B$ is nearly matchable, i.e., $n \leq 2\ell + 2$, then $|\mathcal B||\mathcal C||\mathcal D| \leq \left\lfloor n^2/4\right\rfloor^3$ follows from Proposition \ref{prop:2.6}.

From now on we assume $n > 2\ell + 2$ and apply Proposition \ref{prop:1.2} to $\mathcal B$.
Let $V = X \cup Y \cup Z$ be the corresponding partition and $\bigl\{(x_i, y_i) : 1 \leq i \leq \ell\bigr\}$ a maximal matching in~$\mathcal B$.

Let $p$ be the maximum degree inside $X$ in the bipartite graph $\mathcal B \cap (X \times Z)$.
For convenience suppose $x_1$ has degree $p$ and $(x_1, z_j) : 1 \leq j \leq p$ are the edges from $x_1$ to $Z$.

The important thing to note is that being RBT-free implies that for $1 \leq j < j' \leq p$, $(z_j, z_{j'}) \notin \mathcal C \cup \mathcal D$.
Indeed, otherwise $\bigl\{x_1, z_j, z_{j'}\bigr\}$ would span a rainbow triangle (note that we use $\mathcal B \subset \mathcal C$, $\mathcal B \subset \mathcal D$ but do not need $\mathcal C \subset \mathcal D$ for this).

Let us provide upper bounds on $|\mathcal B|$, $|\mathcal C|$ and $|\mathcal D|$.
Set $q = |Z|$.

\begin{proposition} \rule{5mm}{0pt}
\label{prop:3.1}

\noindent
{\rm (i)} \quad\ $|\mathcal B| \leq \ell^2 + \ell p$.

\noindent
{\rm (ii)} \quad $\dfrac12\bigl(|\mathcal C| + |\mathcal D|\bigr) \leq \ell^2 + \ell q + {q\choose 2} - {p\choose 2} \leq \ell^2 + \ell p + \dfrac{q^2}{2} - \dfrac{p^2}{2}$.
\end{proposition}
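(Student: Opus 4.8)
The plan is to read both bounds off the structural partition $V = X \sqcup Y \sqcup Z$ that Proposition~\ref{prop:1.2} supplies for the triangle-free graph $\mathcal B$, combined with the local restrictions that RBT-freeness imposes once we know $\mathcal B \subseteq \mathcal C \cap \mathcal D$.

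For (i) I split $|\mathcal B|$ over the three blocks. Inside $W := X \cup Y$: by the degree argument recalled right after Proposition~\ref{prop:1.2}, each of the $2\ell$ vertices of $W$ meets at most one endpoint of each matching pair $\{x_i,y_i\}$, so has at most $\ell$ $\mathcal B$-neighbours in $W$; hence at most $\ell^2$ edges here. Between $W$ and $Z$: Proposition~\ref{prop:1.2}(ii) gives $\mathcal B(z) \subseteq X$ for every $z \in Z$, so no edge joins $Y$ to $Z$, while by the very definition of $p$ each $x_i$ has at most $p$ $\mathcal B$-neighbours in $Z$; hence at most $\ell p$ edges here. Inside $Z$: none, again by $\mathcal B(z) \subseteq X$. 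Adding these gives $|\mathcal B| \le \ell^2 + \ell p$.

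For (ii) I decompose $|\mathcal C| + |\mathcal D| = S_1 + S_2 + S_3$, where $S_1$, $S_2$, $S_3$ count — each edge weighted by its multiplicity among $\mathcal C, \mathcal D$ — the edges lying inside $W$, between $W$ and $Z$, and inside $Z$, respectively. The workhorse is the one-edge version of Lemma~\ref{lem:2.4}: since every matching edge $E_i = (x_i,y_i)$ lies in $\mathcal B \subseteq \mathcal C \cap \mathcal D$, for any vertex $w \notin E_i$ the absence of a rainbow triangle on $\{w\} \cup E_i$ forbids both rainbow patterns and gives $d_{\mathcal C}(w,E_i) + d_{\mathcal D}(w,E_i) \le 2$. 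Summing this over $w \in W \setminus E_i$ (which contributes at most $4\ell-4$) and adding $4$ for the edge $E_i$ itself, then summing over $i$ and using $\sum_i\bigl(d_{\mathcal C}(x_i,W)+d_{\mathcal C}(y_i,W)\bigr) = 2\,|\mathcal C \cap \binom{W}{2}|$ and likewise for $\mathcal D$, yields $S_1 \le 2\ell^2$. Summing the same local bound over $i$ for a fixed $z \in Z$ is precisely Lemma~\ref{lem:2.4} and gives $d_{\mathcal C}(z,W) + d_{\mathcal D}(z,W) \le 2\ell$, so summing over the $q = |Z|$ vertices of $Z$ gives $S_2 \le 2\ell q$. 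Finally, the observation already recorded before the proposition — that $z_1,\dots,z_p$, the $\mathcal B$-neighbours of $x_1$ in $Z$, span no edge of $\mathcal C \cup \mathcal D$ — means $\mathcal C$ and $\mathcal D$ each induce at most $\binom{q}{2} - \binom{p}{2}$ edges on $Z$, so $S_3 \le 2\bigl(\binom{q}{2} - \binom{p}{2}\bigr)$. Adding the three estimates gives $|\mathcal C| + |\mathcal D| \le 2\bigl(\ell^2 + \ell q + \binom{q}{2} - \binom{p}{2}\bigr)$, which is the first inequality of (ii); the second inequality is then a one-line simplification, expanding $\binom{q}{2}$ and $\binom{p}{2}$ and using $p \le q$.

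Almost everything here is bookkeeping on top of Proposition~\ref{prop:1.2} and the ``$d_{\mathcal C}(w,E_i) + d_{\mathcal D}(w,E_i) \le 2$ per matching edge'' principle. The only part that needs genuine care — and what I regard as the crux — is arranging the three-block decomposition so that each edge of $\mathcal C$ and of $\mathcal D$ is charged exactly once and each block is paired with the right instance of that principle (against $W$-vertices for $S_1$, against a single edge $E_i$ summed over $z \in Z$ for $S_2$), all the while retaining the extra $\binom{p}{2}$ saving coming from the set $\{z_1,\dots,z_p\} \subseteq Z$, which is independent in both $\mathcal C$ and $\mathcal D$.
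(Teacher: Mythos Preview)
Your proof is correct and follows the paper's own argument essentially step for step: the same three-block split of $V$, the same per-matching-edge bound $d_{\mathcal C}(w,E_i)+d_{\mathcal D}(w,E_i)\le 2$ driving $S_1$ and $S_2$, and the same $\binom{p}{2}$ saving inside $Z$ for $S_3$. One remark worth making: the rightmost expression in (ii) carries a typo in the paper --- it should read $\ell^2+\ell q+q^2/2-p^2/2$ (this is the form actually used in \eqref{eq:3.1}); with $\ell q$ in place of $\ell p$ your ``one-line simplification via $p\le q$'' is valid, but as literally printed the inequality fails whenever $\ell\ge 1$ and $q>p$.
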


\begin{proof}
(i) follows from Proposition \ref{prop:1.2} and the definition of $p$.
To prove (ii) note that the RBT-free property implies that for all $1 \leq i < i' \leq \ell$ out of the four edges joining $(x_i, y_i)$ and $(x_{i'}, y_{i'})$, counting with multiplicity, there are at most four in $\mathcal C \cup \mathcal D$.
This implies
$
\left|\mathcal C \cap {X \cup Y\choose 2}\right| + \left|\mathcal D \cap {X \cup Y\choose 2}\right| \leq 2\ell^2.
$
Then Lemma \ref{lem:2.4} yields
$ 
\left|\mathcal C \cap (X \cup Y)\times Z\right| + \left|\mathcal D \cap (X \cup Y)\times Z\right| \leq 2\ell q.
$ 
Finally from the above observation, at least ${p\choose 2}$ edges are missing from ${Z \choose 2}$ in both $\mathcal C$ and $\mathcal D$.
Thus
$$
\left|\mathcal C \cap {Z\choose 2}\right| + \left|\mathcal D \cap {Z\choose 2}\right| \leq 2 {q\choose 2} - 2{p\choose 2} = q^2 - p^2 - (q - p).
$$
Summing these inequalities yields (ii).
\end{proof}

In view of Proposition \ref{prop:3.1} in order to prove Theorem \ref{th:1.5} we should show
\beq
\label{eq:3.1}
(\ell^2 + \ell p)\left(\ell^2 + \ell q + \frac{q^2}{2} - \frac{p^2}{2}\right)^2 \leq \left\lfloor \frac{(2\ell + q)^2}{4}\right\rfloor^3.
\eeq

To avoid meticulous calculation we shall only deal in detail with the case $q$ is even, i.e., when we can remove the integer part symbol $\lfloor \ \ \rfloor$.
However it will be clear from the proof that for $q > 0$ the inequality is always strict and for $q \geq 3$ there is plenty of room left to take care of the difference of $1/4$ for $q$ odd.

For notational purposes define $0 \leq \alpha \leq \beta$ by $q = 2\ell\beta$, $p = 2\ell \alpha$.
Now \eqref{eq:3.1} is equivalent to
$$
(1 + 2\alpha) \bigl(1 + 2\beta + 2\beta^2 - 2\alpha^2\bigr)^2 \leq (1 + \beta)^6.
$$
Noting that $(1 + \alpha)^2 \geq 1 + 2\alpha$, it is sufficient to show
\beq
\label{eq:3.2}
(1 + \alpha) \bigl(1 + 2\beta + 2\beta^2 - 2\alpha^2\bigr) \leq (1 + \beta)^3.
\eeq
After expanding we get
$$
\alpha + (2\alpha \beta - 2\alpha^2) + \bigl(2\alpha(\beta - \alpha)(\beta + \alpha)\bigr) \leq \beta + \beta^2 + \beta^3.
$$
Now, $\alpha \leq \beta$ and $2\alpha \beta - 2\alpha^2 = 2(\beta - \alpha)\alpha < \beta^2/2$ and further
$2\alpha(\beta - \alpha) (\beta + \alpha) \leq 2 \cdot \frac{\beta^2}{4} \cdot 2\beta = \beta^3$.
Thus \eqref{eq:3.2} and thereby the theorem is proved.\hfill $\square$

\medskip
Let us mention that in fact we proved
$$
|\mathcal B||\mathcal C||\mathcal D| \leq \left(\frac{(2\ell + q)^2}{4}\right)^3 \biggm/\left(\frac{(1 + \alpha)^2}{1 + 2\alpha}\right)^2.
$$
Unless $\alpha$ is very-very small the RHS is smaller than the RHS of \eqref{eq:3.1} even for $q$ odd.
On the other hand if $\alpha$ is small, then \eqref{eq:3.1} can be easily proved.

Let us conclude this paper with the obvious conjecture.

\medskip
\noindent
{\bf Conjecture 3.}
{\it Suppose that $\mathcal B, \mathcal C, \mathcal D \subset {V\choose 2}$, $|V| = n$ and $\mathcal B, \mathcal C, \mathcal D$ are RBT-free.
Then }
\beq
\label{eq:3.3}
|\mathcal B||\mathcal C||\mathcal D| \leq \left\lfloor n^2/4\right\rfloor^3.
\eeq

\frenchspacing

\end{document}